\newtheorem{theorem}{Theorem}
\newtheorem{lemma}{Lemma}
\def\C{{\mathcal{C}}}
\DeclareMathOperator*{\argmin}{arg\,min}
\DeclareMathOperator*{\argmax}{arg\,max}
\DeclareMathOperator{\rank}{rank}
\newcommand{\sing}[1]{\vec{\sigma}{#1}}
\newcommand{\inner}[2]{\langle#1,#2\rangle}
\newcommand{\frobnorm}[1]{\norm{#1}_F}
\DeclareMathOperator{\tr}{tr}
\newcommand{\te}[1]{\text{#1}}
\newcommand{\reg}{\ensuremath{\mathcal{R}}}
\pgfplotsset{compat=1.3}
\def\reg{\mathcal{R}}
\def\A{\mathcal{A}}
\newcommand{\fr}[2]{\frac{#1}{#2}}
\renewcommand{\vec}[1]{%
  \ifcat\noexpand#1\relax 
    \bm{#1} 
  \else
    \mathbf{#1} 
  \fi
}
\newcommand{\hada}{\odot}
\newcommand{\R}{\mathbbm{R}}
\newcommand{\norm}[1]{\left\|#1\right\|}
\newcommand{\T}{T}
\ifcvprfinal\pagestyle{empty}\fi
\begin{document}

\title{A Unified Optimization Framework for Low-Rank Inducing Penalties}

\def\ww{5mm}
\author{Marcus Valtonen \"Ornhag$^1$ \hspace{\ww} Carl Olsson$^{1,2}$ \hspace{\ww} Anders Heyden$^1$\\[0.2 cm]
	\begin{minipage}[c]{0.4\textwidth}
		\centering
		${}^1$Centre for Mathematical Sciences\\
		Lund University 
	\end{minipage}
	\begin{minipage}[c]{0.4\textwidth}
	\centering
	${}^2$Department of Electrical Engineering\\
	Chalmers University of Technology 
\end{minipage}
	\\[0.4cm]
	{\tt\small  \{marcus.valtonen\_ornhag,\,carl.olsson,\,anders.heyden\}@math.lth.se}
}

\maketitle

\begin{abstract}
In this paper we study the convex envelopes of a new class of functions. Using
this approach, we are able to unify two important classes of regularizers from
unbiased non-convex formulations and weighted nuclear norm penalties.
This opens up for possibilities of combining the best of both worlds, and to leverage
each methods contribution to cases where simply enforcing one of the regularizers
are insufficient.

We show that the proposed regularizers can be incorporated in standard splitting schemes
such as
Alternating Direction
Methods of Multipliers (ADMM), and other subgradient methods. Furthermore, we provide an
efficient way of computing the
proximal operator.

Lastly, we show on real non-rigid structure-from-motion (NRSfM) datasets, the issues
that arise from using weighted nuclear norm penalties, and how this can be
remedied using our proposed method.
\end{abstract}

\section{Introduction}
Dimensionality reduction using Principal Component Analysis (PCA) is widely used
for all types of data analysis, classification and clustering. In recent years,
numerous subspace clustering methods have been proposed, to address the shortcomings
of traditional PCA methods. The work on Robust PCA by Cand{\`e}s~\etal{}~\cite{candes2011robust}
is one of the most influential papers on the subject, which sparked a large research
interest from various fields including computer vision. Applications include, but are not
limited to,
rigid and non-rigid structure-from-motion~\cite{bregler-etal-cvpr-2000,angst-etal-iccv-2011},
photometric stereo~\cite{basri-etal-ijcv-2007} and optical flow~\cite{garg-etal-ijcv-2013}.

It is well-known that the solution to
\begin{equation}
    \min_{\rank(X)\leq r}\frobnorm{X-X_0}^2,
\end{equation}
where~$\frobnorm{\cdot}$ is the Frobenius norm,
can be given in closed form using the singular value decomposition (SVD) of the
measurement matrix~$X_0$.
The character of the problem changes drastically, when
considering objectives such as
\begin{equation}
    \label{eq:hardrank}
    \min_{\rank(X)\leq r}\norm{\A(X)-\vec{b}}^2,
\end{equation}
where~$\A\::\:\R^{m\times n}\rightarrow\R^p$ is a linear operator, $\vec{b}\in\R^{p}$,
and~$\norm{\cdot}$ is the standard Euclidean norm. In fact, such problems are in general
known to be NP hard~\cite{gillis-glineur-siam-2011}.
In many cases, however, the rank is not known \emph{a priori},
and a ``soft rank'' penalty can be used instead
\begin{equation}
    \label{eq:softrank}
    \min_X    \mu\rank(X)+\norm{\A(X)-\vec{b}}^2.
\end{equation}
Here, the regularization parameter~$\mu$ controls the trade-off between enforcing
the rank and minimizing the residual error, and can be tuned to problem specific applications.

In order to treat objectives of the form~\eqref{eq:hardrank} and~\eqref{eq:softrank},
a convex surrogate of the rank penalty is often used. One popular approach
is to use the nuclear norm~\cite{recht-etal-siam-2010,candes2011robust}
\begin{equation}
    \norm{X}_* = \sum_{i=1}^n\sigma_i(X),
\end{equation}
where $\sigma_i(X)$, $i=1,\ldots,n$, is the $i$:th singular value of~$X$.
One of the drawbacks of using the nuclear norm penalty is that both large and
small singular values are penalized equally hard. This is referred to as shrinking bias,
and to counteract such behavior, methods penalizing small singular values (assumed
to be noise) harder have been proposed~\cite{oymak-etal-2015,mohan2010iterative,hu-etal-pami-2013,oh-etal-pami-2016,olsson-etal-iccv-2017,larsson-olsson-ijcv-2016,canyi2015,shang-etal-2018}.

\subsection{Related Work}
Our work is a generalization of Larsson and Olsson~\cite{larsson-olsson-ijcv-2016}. They
considered problems on the form
\begin{equation}
    \label{eq:gprob}
    \min_X g(\rank(X))+\frobnorm{X-X_0}^2,
\end{equation}
where the regularizer~$g$ is non-decreasing and piecewise constant,
\begin{equation}
    \label{eq:g}
    g(k) = \sum_{i=1}^kg_i.
\end{equation}
Note, that for $g_i\equiv\mu$ we obtain~\eqref{eq:softrank}. Furthermore, if we let
$g_i=0$ for $i\leq r_0$, and $\infty$ otherwise, \eqref{eq:hardrank} is obtained.
The objectives~\eqref{eq:gprob} are difficult to optimize, as they, in general, are non-convex and discontinuous. Thus, it is natural to consider a relaxation
\begin{equation}
    \min_X \reg_g(X)+\frobnorm{X-X_0}^2,
\end{equation}
where
\begin{equation}
    \reg(X) = \max_{Z}\left(\sum_{i=1}^n\min(g_i,\sigma_i^2(Z))-\frobnorm{X-Z}^2\right).
\end{equation}
This is the convex envelope of~\eqref{eq:gprob}, hence share the same global minimizers.

Another type of regularizer that has been successfully used in low-level imaging
applications~\cite{gu-2016,mswnnm,mcwnnm}
is the weighted nuclear norm (WNNM),
\begin{equation}
    \norm{X}_{\vec{w},*} = \sum_{i=1}^k w_i\sigma_i(X),
\end{equation}
where~$\vec{w}=(w_1,\ldots, w_k)$ is a weight vector.
Note that the WNNM formulation does not fit the assumptions~\eqref{eq:g}, hence
cannot be considered in this framework.

For certain applications, it is of interest
to include both regularizers, which we will show in Section~\ref{sec:experiments}.
Typically, this is preferable when the rank constraint alone is not strong enough to
yield accurate reconstructions, and further penalization is necessary to restrict
the solutions. To this end, we suggest to merge these penalties.
Our main contributions are:
\begin{itemize}
    \item A novel method for combining bias reduction and non-convex low-rank inducing objectives,
    \item An efficient and fast algorithm to compute the proposed regularizer,
    \item Theoretical insight in the quality of reconstructed missing data using WNNM, and
          practical demonstrations on how shrinking bias is perceived in these applications,
    \item A new objective for Non-Rigid Structure from Motion (NRSfM), with improved performance,
          compared to state-of-the-art prior-free methods, capable of working in cases where
          the image sequences are unordered.
\end{itemize}

First, however, we will lay the ground for the theory of a common framework
of low-rank inducing objectives.

\section{Problem Formulation and Motivation}
In this paper we propose a new class of regularization terms for low rank matrix recovery problems that controls both the rank and the size of the singular values of the recovered matrix.
Our objective function has the form
\begin{equation}
f_h(X) = h(\sing(X)) + \|\A (X) - b\|^2,
\end{equation}
where $h(\sing(X)) = \sum_{i=1}^k h_i(\sigma_i(X))$ and 
\begin{equation}
h_i(\sigma_i(X)) = 
\begin{cases}
2a_i \sigma_i (X) + b_i & \sigma_i(X) \neq 0, \\
0 & \text{otherwise}.
\end{cases}
\end{equation}
We assume that the sequences $\{a_i\}_{i=1}^k$ and $\{b_i\}_{i=1}^k$ are both non-decreasing.

Our approach unifies the formulation of \cite{larsson-etal-eccv-2014} with weighted nuclear norm.
The terms~$2a_i \sigma_i(X)$ correspond to the singular value penalties of a weighted nuclear norm~\cite{gu-2016}. 
These can be used to control the sizes of the non-zero singular values. In contrast, the constants $b_i$ corresponds to a rank penalization that is independent of these sizes and, as we will see in the next section, enables bias free rank selection. 

\subsection{Controlled Bias and Rank Selection}
To motivate the use of both sets of variables $\{a_i\}_{i=1}^k$ and $\{b_i\}_{i=1}^k$, and to understand their purpose, we first consider the simple recovery problem~$\min_Xf_h(X)$, where
\begin{equation}\label{eq:fh}
f_h(X) \coloneqq h(\sing(X)) + \|X-X_0\|_F^2.
\end{equation}
Here~$X_0$ is assumed to consist of a set of large singular values $\sigma_i(X_0)$, $i=1,...,r$, corresponding to the matrix we wish to recover, and a set of small ones  $\sigma_i(X_0)$, $i=r+1,...,k$, corresponding to noise that we want to suppress.

Due to von Neumann's trace theorem the solution can be computed in closed form by considering each singular values separately, and minimize
\begin{equation}
\begin{cases}
2a_i s + b_i + (s-\sigma_i(X_0))^2 & s \neq 0, \\
\sigma_i(X_0)^2 & s = 0,
\end{cases}
\end{equation}
over $s\geq 0$. Differentiating for the case $s \neq 0$ gives a stationary point at
$s = \sigma_i(X) - a_i$ if $\sigma_i(X) - a_i > 0$.
Since this point has objective value $2a_i \sigma_i(X_0) - a_k^2 + b_k$ it is clear that this point will be optimal if
\begin{equation}
2a_i \sigma_i(X_0) - a_i^2 + b_i \leq \sigma_i(X_0)^2,
\end{equation}
or equivalently
$\sigma_i(X_0) - a_i \geq \sqrt{b_i}.$
Summarizing, we thus get the optimal singular values
\begin{equation}
\sigma_i(X) = \begin{cases}
\sigma_i(X_0) - a_i & \sigma_i(X_0) - a_i \geq \sqrt{b_i}, \\
0 & \text{otherwise}.
\end{cases}
\end{equation}
Note, that this is a valid sequence of singular values since under our assumptions $\sigma_i(X_0) - a_i$ is decreasing and $\sqrt{b_i}$ increasing. The red curve of Figure~\ref{fig:bias} shows the recovered singular value as a function of the corresponding observed singular value. For comparison, we also plot the dotted blue curve which shows hard thresholding at $a_i + \sqrt{b}_i$,~\ie{} singular values smaller than $a_i + \sqrt{b}_i$ vanish while the rest are left unaltered.
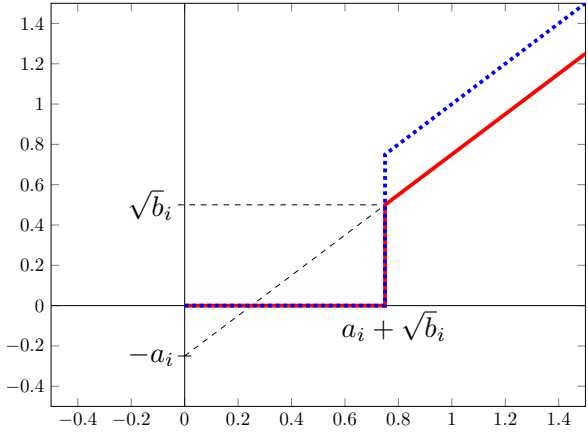
\begin{figure}
\centering
\resizebox{78mm}{!}{
%
%
\begin{tikzpicture}

\begin{axis}[%
width=4.521in,
height=3.566in,
at={(0.758in,0.481in)},
xmin=-0.5,
xmax=1.5,
ymin=-0.5,
ymax=1.5,
axis background/.style={fill=white}
]
\addplot [color=black, forget plot]
  table[row sep=crcr]{%
-0.5	0\\
-0.4	0\\
-0.3	0\\
-0.2	0\\
-0.1	0\\
0	0\\
0.1	0\\
0.2	0\\
0.3	0\\
0.4	0\\
0.5	0\\
0.6	0\\
0.7	0\\
0.8	0\\
0.9	0\\
1	0\\
1.1	0\\
1.2	0\\
1.3	0\\
1.4	0\\
1.5	0\\
};
\addplot [color=black, forget plot]
  table[row sep=crcr]{%
0	-0.5\\
0	-0.4\\
0	-0.3\\
0	-0.2\\
0	-0.1\\
0	0\\
0	0.1\\
0	0.2\\
0	0.3\\
0	0.4\\
0	0.5\\
0	0.6\\
0	0.7\\
0	0.8\\
0	0.9\\
0	1\\
0	1.1\\
0	1.2\\
0	1.3\\
0	1.4\\
0	1.5\\
};
\addplot [color=black, dashed, forget plot]
  table[row sep=crcr]{%
-0.025	0.5\\
0.75	0.5\\
};
\addplot [color=black, forget plot]
  table[row sep=crcr]{%
-0.025	-0.25\\
0.025	-0.25\\
};
\addplot [color=black, dashed, forget plot]
  table[row sep=crcr]{%
0	-0.25\\
1.5	1.25\\
};
\addplot [color=red, line width=2.0pt, forget plot]
  table[row sep=crcr]{%
0	0\\
0.25	0\\
0.75	0\\
0.75	0.5\\
1.5	1.25\\
};
\addplot [color=blue, dotted, line width=2.0pt, forget plot]
  table[row sep=crcr]{%
0	0\\
0.25	0\\
0.75	0\\
0.75	0.75\\
1.5	1.5\\
};
\node[right, align=left,scale=1.5]
at (axis cs:-0.25,0.5) {$\sqrt{b}_i$};
\node[right, align=left,scale=1.5]
at (axis cs:-0.25,-0.25) {$-a_i$};
\node[right, align=left,scale=1.5]
at (axis cs:0.55,-0.1) {$a_i+\sqrt{b}_i$};
\end{axis}
\end{tikzpicture}
\caption{The optimal recovered singular value $\sigma_i(X)$ as a function (red curve) of the observed $\sigma_i(X_0)$.}
\label{fig:bias}
\vspace{-4.5mm}
\end{figure}

Now, suppose that we want to recover the largest singular values unchanged. 
Using the weighted nuclear norm (\mbox{$b_i = 0$}) it is clear that this can only be done if we know that the sought matrix has rank $r$ and let $a_i = 0$ for $i=1,...,r$. 
For any other setting the regularization will subtract $a_i$ from the corresponding non-zero singular value.
In contrast, by letting $a_i=0$ allows exact recovery of the large singular values by selecting $b_i$ appropriately even when the rank is unknown. Hence, in the presence of a weak prior on the rank of the matrix, using only the $b_i$ (the framework in \cite{larsson-olsson-ijcv-2016}) allows exact recovery for a more general set of problems than use of the $a_i$ (weighted nuclear norm formulations).

The above class of problems are well posed with a strong data term $\|X-X_0\|_F^2$.
For problems with weaker data terms, priors on the size of the singular values can still be very useful.
In the context of NRSfM it has been observed \cite{olsson-etal-iccv-2017,dai-etal-ijcv-2014} that adding a bias can improve the distance to the ground truth reconstruction, even though it does not alter the rank.
The reason is that, when the scene is not rigid, several reconstructions with the same rank may co-exist, thus resulting in similar projections. By introducing bias on the singular values, further regularization is enforced on the deformations, which may aid in the search for correct reconstructions.
For example, with $a_1 = 0$ and $a_i > 0$, $i > 1$ we obtain a penalty that favors matrices that "are close to" rank $1$. In the formulation of \cite{dai-etal-ijcv-2014}, where rank~$1$ corresponds to a rigid scene this can be thought of as an "as rigid as possible" prior, which is realistic in many cases \cite{videopopup,dynamicfusion}, but which has yet to be considered in the context of factorization methods.
\footnote{To regularize the problem Dai~\etal{} incorporated a penalty of the derivatives of the 3D tracks, which also can be seen as a prior preferring rigid reconstructions. However, this option is not feasible for unsorted image collections.}

\subsection{The Quadratic Envelope}
As discussed above the two sets of parameters $\{a_i\}$ and $\{b_i\}$ have complementary regularization effects. 
The main purpose of unifying them is to create more flexible priors allowing us to do accurate rank selection with a controlled bias. In the following sections, we also show that they have relaxations that can be reliably optimized.
Specifically, the resulting formulation $h(\sing{(X)})$, which is generally non-convex and discontinuous, can be relaxed by computing the so called quadratic envelope \cite{carlsson2016convexification}.
The resulting relaxation $\reg_h(\sing(X))$ is continuous and in addition $\reg_h(\sing(X))+\|X-X_0\|_F^2$ is convex. 
For a more general data term there may be multiple local minimizers. 
However, it is known that 
\begin{equation}
    h(\sing(X))+\|\A (X)-b\|^2,
\label{eq:unrelaxed}
\end{equation}
and 
\begin{equation}
    \reg_h(\sing(X))+\|\A (X)-b\|^2,
\label{eq:relaxed}
\end{equation}
have the same global minimizer when $\|\A \| < 1$ \cite{carlsson2016convexification}. In addition, potential local minima of \eqref{eq:relaxed} are also local minima of \eqref{eq:unrelaxed}; however, the converse does not hold.
We also show that the proximal operator of $\reg_h(\sing(X))$ can be efficiently computed which allows simple optimization using splitting methods such as ADMM \cite{boyd-etal-2011}.

\section{A New Family of Functions}
Consider functions on the form~\eqref{eq:fh}.
This is a generalization of~\cite{larsson-olsson-ijcv-2016}; and the derivation for
our objective follows a similar structure. We outline this in detail in the
supplementary material, where we show that
convex envelope~$f^{**}_h$ is given by
\begin{equation}
    f^{**}_h(X) = \reg_h(X)+\frobnorm{X-X_0}^2,
\end{equation}
where
\begin{equation}\label{eq:reghfull}
\begin{aligned}
    \reg_h(X)
    &\coloneqq
    \max_Z\Bigg(\sum_{i=1}^n\min\left(b_i, \left[\sigma_i(Z)-a_i\right]_+^2\right)+ \frobnorm{Z}^2\\
    &-\frobnorm{X-Z}^2  - \sum_{i=1}^n\left[\sigma_i(Z)-a_i\right]_+^2\Bigg).
\end{aligned}
\end{equation}
Furthermore, the optimization can be reduced to the singular values only,
\begin{equation}
\begin{aligned}
\label{eq:maxsingz}
    \reg_h(X)
    &=
    \max_{\sing(Z)}
    \Bigg(
    \sum_{i=1}^n
    \min\left(b_i, \left[\sigma_i(Z)-a_i\right]_+^2\right)
    +
    \sigma_i^2(Z) \\
    &-
    (\sigma_i(X)-\sigma_i(Z))^2
    -
    \left[\sigma_i(Z)-a_i\right]_+^2\Bigg).
\end{aligned}
\end{equation}
This optimization problem is concave, hence can be solved with standard convex solvers
such as MOSEK or CVX; however, in the next section we show that the problem
can be turned into a series of one-dimensional problems, and the resulting algorithm
for computing~\eqref{eq:reghfull} is magnitudes faster than applying a general purpose solver.

\section{Finding the Maximizing Sequence}\label{sec:finding}
Following the approach used in~\cite{larsson-olsson-ijcv-2016},
consider the program
\begin{equation}
\label{eq:unconstrained}
\begin{aligned}
&\max_s    && f(s) \\
&\te{s.t.} && \sigma_{i+1}(Z)\leq s\leq \sigma_{i-1}(Z).
\end{aligned}
\end{equation}
where~$\sigma_i(Z)$ is the $i$:th singular value of the maximizing sequence
in~\eqref{eq:maxsingz}, and
\begin{equation}\label{eq:f}
f(s) = \min\{b_i,[s-a_i]_+^2\}-(s-\sigma_i(X))^2+s^2-[s-a_i]_+^2.
\end{equation}
The objective function~$f$ can be seen as the
pointwise minimum of two concave functions, namely,
$f_1(s) = b_i+2\sigma_i(X)s-\sigma_i^2(X)-[s-a_i]_+^2$ and
$f_2(s) = 2\sigma_i(X)s-\sigma_i(X)^2$,~\ie{}
$f(s) = \min\{f_1(s),f_2(s)\}$, hence~$f$ is concave.

The individual unconstrained optimizers are given
by $s_i=a_i+\max\{\sqrt{b_i},\sigma_i(X)\}$. In previous work~\cite{larsson-olsson-ijcv-2016},
where $a_i\equiv 0$, an algorithm was devised to find the maximizing singular
vector, by turning it to a optimization problem of a single variable. This
method is not directly applicable, as the sequence~$\{s_i\}_{i=1}^k$, in general,
does not satisfy the necessary conditions\footnote{In order to use the algorithm,
the sequence~$\{s_i\}_{i=1}^k$ must be non-increasing for $i<p$,
non-decreasing for $p\leq i\leq q$, and non-increasing for $i>q$, for some $p$, and $q$.}.
In fact, the number of local extrema in the sequence~$\{s_i\}_{i=1}^k$ is only limited by its
length. We show an example of a sequence in Figure~\ref{fig:experimental},
and the corresponding maximizing sequence.
Nevertheless, it is possible to devise an algorithm that returns the
maximizing singular value vector, as we will show shortly.

In order to do so, we can apply some of the thoughts behind the proof
behind~\cite{larsson-olsson-ijcv-2016}.
Consider the more general optimization problem of
minimizing~$g(\vec{\sigma})=\sum_{i=1}^kf_i(s_i)$,
subject to $\sigma_1\geq\sigma_2\geq\cdots\geq \sigma_k\geq 0$, where $f_i$ are
concave. Then, given the unconstrained sequence of minimizers $\{s_i\}_{i=1}^k$, the elements of the
constrained
sequence~$\{\sigma_i\}_{i=1}^k$
can be limited to three choices
\begin{equation}
\label{eq:cases}
\sigma_i =
\left\{
\begin{aligned}
    & s_i && \te{ if } \sigma_{i+1}\leq s_i \leq \sigma_{i-1}, \\
    & \sigma_{i-1} && \te{ if } \sigma_{i-1} < s_i, \\
    & \sigma_{i+1} && \te{ if }  s_i < \sigma_{i+1}. \\
\end{aligned}
\right.
\end{equation}
Furthermore, the regions between local extreme points (of the unconstrained singular values)
are constant.

\begin{lemma}
\label{lem:1}
Assume $s_p$ and $s_q$ are local extrema of $\{s_i\}_{i=1}^k$ and that the subsequence
$\{s_i\}_{i=p}^q$ are non-decreasing. Then the corresponding subsequence of the
constrained problem $\{\sigma_i\}_{i=p}^q$ is constant.
\end{lemma}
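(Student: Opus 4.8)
The plan is to recognize the program preceding the lemma as a separable concave maximization over the isotonic cone $\sigma_1 \ge \cdots \ge \sigma_k \ge 0$ and to read the conclusion off its first-order optimality conditions. Rather than argue globally, I would prove the sharper \emph{local} statement: whenever two adjacent indices satisfy $s_j \le s_{j+1}$, the optimal constrained values cannot strictly decrease across them, i.e.\ $\sigma_j = \sigma_{j+1}$. Since the hypothesis makes the run non-decreasing, $s_j \le s_{j+1}$ holds for every $j$ with $p \le j \le q-1$, and this local statement immediately forces $\sigma_p = \sigma_{p+1} = \cdots = \sigma_q$. Notice that only the pairwise inequality $s_j \le s_{j+1}$ is needed; the local-extrema hypothesis merely delimits the maximal non-decreasing run.

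The key step is a KKT computation. Because the constraints are linear, any optimizer satisfies the KKT conditions. Writing a Lagrangian with multipliers $\lambda_i \ge 0$ for $\sigma_i - \sigma_{i+1} \ge 0$ (with $\lambda_0 := 0$ and a separate multiplier for $\sigma_k \ge 0$), stationarity reads $f_i'(\sigma_i) = \lambda_{i-1} - \lambda_i$. Suppose, toward a contradiction, that a strict drop $\sigma_j > \sigma_{j+1}$ occurs at some $j$ with $p \le j \le q-1$. Complementary slackness then gives $\lambda_j = 0$, so stationarity at $j$ yields $f_j'(\sigma_j) = \lambda_{j-1} \ge 0$ and at $j+1$ yields $f_{j+1}'(\sigma_{j+1}) = -\lambda_{j+1} \le 0$ (the boundary case $j+1 = k$ being identical using the multiplier of $\sigma_k \ge 0$). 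Since each $f_i$ is concave, $f_i'$ is non-increasing with its root at the unconstrained maximizer $s_i$; hence $f_j'(\sigma_j) \ge 0$ forces $\sigma_j \le s_j$, while $f_{j+1}'(\sigma_{j+1}) \le 0$ forces $\sigma_{j+1} \ge s_{j+1}$.

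Closing the contradiction is then immediate: combining these two inequalities with the assumed drop and the monotonicity of the run gives
\[
s_j \;\ge\; \sigma_j \;>\; \sigma_{j+1} \;\ge\; s_{j+1} \;\ge\; s_j ,
\]
that is $s_j > s_j$, which is absurd. Therefore no strict drop occurs for any $j \in \{p,\ldots,q-1\}$, and since the constrained sequence is non-increasing it must be constant on $[p,q]$, which is the claim.

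The delicate points are all in setting up the optimality conditions, not in the final chain. First, I must fix the sign convention carefully: the general program has to be read as \emph{maximizing} the concave separable objective of \eqref{eq:f} (so that the $s_i$ are genuine unconstrained maximizers), and the multiplier signs and the edge conventions $\lambda_0 = 0$ and the $\sigma_k \ge 0$ multiplier must be tracked accordingly. Second, the $f_i$ from \eqref{eq:f} are only piecewise differentiable, so strictly one works with subgradients: there exist $\xi_i \in \partial f_i(\sigma_i)$ with $\xi_i = \lambda_{i-1} - \lambda_i$, and monotonicity of the subdifferential still yields $\sigma_j \le s_j$ and $\sigma_{j+1} \ge s_{j+1}$ (taking $s_i$ to satisfy $0 \in \partial f_i(s_i)$). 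The hard part, conceptually, is resisting the more obvious pool-adjacent-violators route: there the natural estimate ``a block value lies between the smallest and largest $s_i$ of the block'' collides with the fact that an optimal block may spill past the local extremum bounding the window, where monotonicity of $\{s_i\}$ fails. The purely local KKT argument above sidesteps this entirely, and that is why I would adopt it.
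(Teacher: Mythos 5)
Your proof is correct, but it takes a genuinely different route from the paper's. The paper's own argument is a two-line forward induction over the run $p\le i\le q-1$ that leans entirely on the clamping characterization \eqref{eq:cases} stated just before the lemma: if $\sigma_i>s_i$, the only admissible case of \eqref{eq:cases} is $\sigma_i=\sigma_{i+1}$, and if $\sigma_i\le s_i$, then $\sigma_{i+1}\le\sigma_i\le s_i\le s_{i+1}$ again forces $\sigma_{i+1}=\sigma_i$. You bypass \eqref{eq:cases} altogether and obtain adjacent equality directly from stationarity and complementary slackness for the separable concave program over the isotonic cone; the chain $s_j\ge\sigma_j>\sigma_{j+1}\ge s_{j+1}\ge s_j$ is sound, and your observation that only the pairwise inequality $s_j\le s_{j+1}$ is actually used (the local-extrema hypothesis merely delimits the maximal run) is a genuine sharpening. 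What the paper's route buys is brevity, at the price of resting on \eqref{eq:cases}, which is itself asserted without proof (it is the coordinate-wise clamping property of a concave function maximized over an interval). What your route buys is self-containedness and generality -- it is essentially the pool-adjacent-violators optimality argument for isotonic regression -- at the price of care with subgradients at the kinks of \eqref{eq:f}. The one step to watch is $f_{j+1}'(\sigma_{j+1})\le 0\Rightarrow\sigma_{j+1}\ge s_{j+1}$: it needs the superdifferential to be strictly positive to the left of $s_{j+1}$, which holds for \eqref{eq:f} whenever $\sigma_{j+1}(X)>0$ but degenerates when the observed singular value vanishes, where $f$ is flat on $\left[0,\,a_{j+1}+\sqrt{b_{j+1}}\right]$ and the constrained maximizer is no longer unique. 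The paper's version of the lemma silently assumes away the same degeneracy, so this is a shared caveat rather than a defect of your argument.
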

\begin{proof}
Consider $\sigma_i$ for some $p\leq i\leq q-1$. If $\sigma_i>s_i$, then by~\eqref{eq:cases}
we have $\sigma_{i+1}=\sigma_i$. If instead $\sigma_i\leq s_i$, we have
$\sigma_{i+1}\leq\sigma_i\leq s_i\leq s_{i+1}$ and by~\eqref{eq:cases}, $\sigma_{i+1}=\sigma_i$.
\end{proof}

\begin{algorithm}[t]
\KwData{Weights $\vec{a}$, $\vec{b}$, and $\sing(X)$}
\KwResult{Maximizing vector $\sing(Z^*)=\{\sigma_i\}_i$}
Initialize with the unconstrained maximizers~$\sigma_i = a_i + \max\{\sqrt{b_i},\sigma_i(X)\}$\;
\While{$\sing(Z^*)$ is not a valid singular value vector}{
    Find local extrema of $\sing(Z^*)$ and generate subintervals $\{\iota_k\}_{k\in\mathcal{I}}$\;
    \For{$k\in\mathcal{I}$}{
        Find scalar~$s^* = \argmax_sf(s)$ where $f$ is defined in~\eqref{eq:f}\;
        Update $\sigma_i = s^*$ for all $i\in\iota_k$.
    }
}
\caption{Algorithm for finding the maximizing singular value vector.}
\label{algo:1}
\end{algorithm}

We can now devise an algorithm that returns the maximizing sequence, see Algorithm~\ref{algo:1}.
Essentially, the algorithm starts at the unconstrained solution, and then
adds more constraints, by utilizing Lemma~\ref{lem:1}, until all of them are fulfilled.

\begin{theorem}\label{thm:algo1}
Algorithm~\ref{algo:1} returns the maximizing sequence.
\end{theorem}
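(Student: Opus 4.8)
The plan is to use concavity to reduce correctness to a first-order verification, and then to check those conditions on the piecewise-constant vector that the algorithm returns. Since each $f_i$ is concave and the feasible set $\{\sigma_1\ge\cdots\ge\sigma_k\ge 0\}$ is a convex polyhedron, the program \eqref{eq:maxsingz} maximizes a concave objective over a convex set, so any feasible point satisfying the KKT conditions is a global optimizer. It therefore suffices to establish three things about the output $\{\sigma_i\}$: that the loop terminates, that the returned sequence is feasible, and that it satisfies the stationarity conditions. Feasibility is immediate from the loop guard, since the \textbf{while} exits only once $\{\sigma_i\}$ is a valid (non-increasing, non-negative) singular value vector; the content is in termination and optimality. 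I would also record early that nonnegativity is preserved automatically: the initializers $s_i=a_i+\max\{\sqrt{b_i},\sigma_i(X)\}$ are nonnegative, and every pooled value lies between the smallest and largest $s_j$ of its block, so the only possible infeasibility is a monotonicity violation, i.e. an ascent $\sigma_i<\sigma_{i+1}$.

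For termination I would use the number of maximal constant blocks as an integer potential. Pooling a non-decreasing run to a single constant only ever merges adjacent blocks and never splits one, so the block count is non-increasing; and as long as the iterate is infeasible it contains an ascent, hence two distinct adjacent blocks that the pass fuses, so the count strictly drops. Bounded below by one, the loop runs at most $k-1$ times. A point to verify here is that, because the data $f_i$ are fixed throughout, re-solving $\argmax_s\sum_{i\in\iota}f_i(s)$ over a merged interval returns the aggregate unconstrained maximizer of that interval; thus at termination every maximal block $B$ carries the value $c_B=\argmax_s\sum_{i\in B}f_i(s)$, characterized by $\sum_{i\in B}f_i'(c_B)=0$, and—since the terminal sequence is non-increasing with maximal constant runs—consecutive block values strictly decrease.

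For optimality I would construct the multipliers of the ordering constraints and of $\sigma_k\ge 0$. On a block $B=[p,q]$ the boundary multipliers vanish (the strict inter-block inequalities are inactive), which forces $\mu_i=-\sum_{j=p}^i f_j'(c_B)$, so the entire question collapses to the cumulative sign condition $\sum_{j=p}^i f_j'(c_B)\le 0$ for all $p\le i\le q$, with equality at $i=q$. Given these, the within-block analysis of \eqref{eq:cases} is automatic, as each of its three branches reduces to $\sigma_i=c_B$, and the strict inter-block decrease supplies the boundary branches. Here Lemma~\ref{lem:1} is the key tool: it pins the mandatory merges and confines candidate block boundaries to descents of $\{s_i\}$, so the partition the algorithm forms aligns with that of the optimum.

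The main obstacle is exactly this cumulative-condition step. The subtlety is that \eqref{eq:cases} alone is only a necessary, coordinate-wise condition: one can exhibit feasible sequences satisfying \eqref{eq:cases} whose common block value is \emph{not} the aggregate maximizer, so concavity-based sufficiency must be driven by the full KKT system rather than by \eqref{eq:cases} alone. The crux is therefore to prove that the greedy, order-independent pooling reaches the solution whose block values are the aggregate maximizers \emph{and} whose partial derivative sums carry the correct sign—equivalently, that pooling an adjacent non-decreasing run is always ``safe,'' never merging indices that the true optimum keeps separated. I expect to discharge this with an exchange argument coupled to Lemma~\ref{lem:1}, handling the activation of the bound $\sigma_k\ge 0$ (the case where an unconstrained $c_B$ would be driven negative) as a separate boundary case.
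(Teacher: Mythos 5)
Your route is genuinely different from the paper's: you propose to certify the output a posteriori via the KKT system of the concave program, whereas the paper argues a priori that the algorithm solves a nested chain of relaxations $\mathcal{Z}_0\supset\mathcal{Z}_1\supset\cdots\supset\mathcal{Z}_\ell\supset\mathcal{Z}$, each obtained by adding the ordering constraints on one subinterval; since the final iterate is feasible for $\mathcal{Z}$ while optimal over the larger set $\mathcal{Z}_\ell$, the chain $\max_{\mathcal{Z}_0}c\geq\cdots\geq\max_{\mathcal{Z}_\ell}c=\max_{\mathcal{Z}}c$ closes with no multipliers at all. Two of your ingredients are actually sharper than the paper's: the block-count potential gives an explicit bound of at most $k-1$ passes where the paper only says ``finitely many subintervals,'' and the observation that nonnegativity is automatic because every pooled value lies in the convex hull of the initial $s_i\geq 0$ is correct and worth recording.

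There is, however, a genuine gap at exactly the point you flag as the crux. The whole content of the theorem is the claim that pooling a non-decreasing run is ``safe'' --- equivalently, that the terminal blocks satisfy the cumulative conditions $\sum_{j=p}^{i}f_j'(c_B)\leq 0$ for $p\leq i\leq q$ with equality at $i=q$ --- and you defer this to an unspecified ``exchange argument coupled to Lemma~\ref{lem:1}.'' Lemma~\ref{lem:1} as stated does not deliver it: it only asserts that the optimum of the chain-constrained problem is constant on a non-decreasing run of the \emph{original} unconstrained maximizers, whereas in the second and later passes of the \textbf{while} loop the runs are runs of block values, so you need a block-level restatement of the lemma (with each $f_i$ replaced by a sum over a block) together with the verification that the optimal constant on a block is $\argmax_s\sum_{i\in B}f_i(s)$. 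Without that, the multiplier signs are asserted rather than proved, and, as you yourself note, \eqref{eq:cases} alone is too weak to supply them. The paper discharges precisely this step by reapplying Lemma~\ref{lem:1} at every stage to show that each newly imposed constancy constraint is satisfied by the optimum of the current relaxation, which is why it can dispense with first-order conditions entirely; if you keep the KKT framing you must still prove that same block-level pooling lemma, and the proposal as written stops exactly where that work begins.
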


\begin{proof}
See the supplementary material.
\end{proof}

\begin{figure}[h]
\centering
\includegraphics[width=0.95\linewidth]{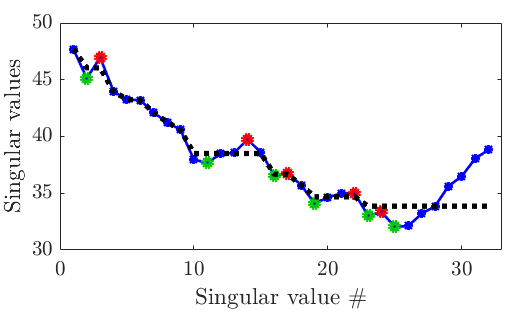}%
\caption{Example of a sequence of unconstrained maximizers (blue line), local extrema (green and red) and the maximizing sequences (dashed black) obtained by Algorithm~\ref{algo:1}.}
\label{fig:experimental}
\end{figure}

\newcommand{\STAB}[1]{\begin{tabular}{@{}c@{}}#1\end{tabular}} 

\begin{table*}[t]
\centering
\caption{Distance to ground truth (normalized) mean valued over 20 problem instances for
different percentages of missing data and data patterns. The standard deviation of the noise
is kept constant at~$\sigma=0.1$. Best results are marked in bold.}
\setlength\tabcolsep{0.14cm}
{\footnotesize
\rowcolors{2}{blue!15}{white!10}
\begin{tabular}{r | rrrrrr | rr | rr | r}

Missing\\data (\%) & PCP~\cite{candes-etal-acm-2011} & WNNM~\cite{gu-2016} &
Unifying~\cite{cabral-etal-iccv-2013} & LpSq~\cite{nie-2012} &
S12L12~\cite{shang-etal-2018} & S23L23~\cite{shang-etal-2018} &
IRNN~\cite{canyi2015} & APGL~\cite{toh-yun-2010} &
$\norm{\cdot}_*$~\cite{boyd-etal-2011}  & $\reg_\mu$~\cite{larsson-olsson-ijcv-2016} & Our\\

\toprule 
0 & 0.0400 & 0.0246 & 0.0406 & 0.0501 & 0.0544 & 0.0545 & 0.0551 & 0.0229 & 0.1959 & \textbf{0.0198} & 0.0199 \\
20 & 0.3707 & 0.2990 & 0.3751 & 0.1236 & 0.1322 & 0.0972 & 0.0440 & 0.0233 & 0.2287 & 0.0257 & \textbf{0.0198} \\
40 & 1.0000 & 0.6185 & 0.9355 & 0.1265 & 0.1222 & 0.1137 & 0.0497 & 0.0291 & 0.3183 & 0.2105 & \textbf{0.0248} \\
60 & 1.0000 & 0.8278 & 1.0000 & 0.1354 & 0.1809 & 0.1349 & 0.0697 & 0.0826 & 0.5444 & 0.3716 & \textbf{0.0466} \\
\multirow{-6}{*}{\STAB{\rotatebox[origin=c]{90}{\scriptsize Uniform}}\phantom{AA}}
80 & 1.0000 & 0.9810 & 1.0000 & 0.7775 & 0.6573 & 0.5945 & \textbf{0.2305} & 0.4648 & 0.8581 & 0.9007 & 0.3117 \\

\midrule 
0 & 0.0399 & 0.0220 & 0.0399 & 0.0491 & 0.0352 & 0.0344 & 0.0491 & 0.0205 & 0.1762 & \textbf{0.0176} & 0.0177 \\
10 & 0.3155 & 0.2769 & 0.1897 & 0.1171 & 0.0881 & 0.0874 & 0.0926 & 0.1039 & 0.2607 & 0.0829 & \textbf{0.0802} \\
20 & 0.4681 & 0.4250 & 0.3695 & 0.1893 & 0.1346 & \textbf{0.1340} & 0.1430 & 0.1686 & 0.3425 & 0.2146 & 0.1343 \\
30 & 0.5940 & 0.5143 & 0.4147 & 0.1681 & 0.2822 & 0.3081 & 0.1316 & 0.1594 & 0.3435 & 0.4137 & \textbf{0.1277} \\
40 & 0.7295 & 0.6362 & 0.9331 & 0.2854 & 0.4262 & 0.4089 & 0.1731 & 0.2800 & 0.5028 & 0.5072 & \textbf{0.1705} \\
\multirow{-6}{*}{\STAB{\rotatebox[origin=c]{90}{\scriptsize Tracking}}\phantom{AA}}
50 & 0.7977 & 0.7228 & 0.9162 & 0.4439 & 0.5646 & 0.5523 & \textbf{0.2847} & 0.4219 & 0.5831 & 0.6464 & 0.3128 \\
\bottomrule
\end{tabular}
\label{tab:synth}
}
\end{table*}

\section{ADMM and the Proximal Operator}
We employ the splitting method ADMM~\cite{boyd-etal-2011}, which is a standard tool for
problems of this type. Thus, consider the augmented Lagrangian
\begin{equation}
L(X,Y,\Lambda) = f_h^{**}(X)+\rho\frobnorm{X-Y+\Lambda}^2+\C(Y)-\rho\frobnorm{\Lambda}^2.
\end{equation}
In each iteration, we solve
\begin{align}
X_{t+1} &=\argmin_Xf_h^{**}(X)+\rho\frobnorm{X-Y_t+\Lambda_t}^2,\\
Y_{t+1} &=\argmin_Y\rho\frobnorm{X_{t+1}-Y+\Lambda_t}^2+\C(Y),\\
\Lambda_{t+1} &=X_{t+1} - Y_{t+1} + \Lambda_t.
\end{align}
To evaluate the proximal operator~$f_h^{**}$ one must solve
\begin{equation}
\min_X\reg_h(X) + \frobnorm{X-X_0}^2 + \rho\frobnorm{X-M}^2.
\end{equation}
Note, that due to the definition of~\eqref{eq:reghfull}, this can be seen as a convex-concave min-max
problem, by restricting the minimization of~$X$ over a compact set. By first solving for~$X$
one obtains,
\begin{equation}
    X = M + \fr{X_0-Z}{\rho} = \fr{(\rho+1)Y-Z}{\rho},
\end{equation}
where~$Y=\fr{X_0+\rho M}{1+\rho}$. Similarly, as in~\cite{larsson-olsson-ijcv-2016},
we get a program of the type (excluding constants)
\begin{equation}
\begin{aligned}
    &\max_Z\Bigg(\sum_{i=1}^n\min\left(b_i, \left[\sigma_i(Z)-a_i\right]_+^2\right)-\fr{\rho+1}{\rho}\frobnorm{Z-Y}^2 \\
    &\qquad+ \frobnorm{Z}^2 - \sum_{i=1}^n\left[\sigma_i(Z)-a_i\right]_+^2\Bigg).
\end{aligned}
\end{equation}
Again, the optimization can be reduced to the singular values only.
This bears strong resemblance to~\eqref{eq:unconstrained}, and
we show in the supplementary material that
Algorithm~\ref{algo:1} can be modified, with minimal effort, to solve
this problem as well.

\section{Experiments}\label{sec:experiments}
We demonstrate the shortcomings of using WNNM
for non-rigid reconstruction estimation and
structure-from-motion,
and show that our proposed method performs as good
or better than the current state-of-the-art.
In all applications, we apply the popular approach~\cite{candes2008enhancing,gu-2016,kumar-2019-arxiv}
to choose the weights inversely proportional to the singular values,
\begin{equation}
    w_i = \fr{C}{\sigma_i(X_0)+\epsilon},
\end{equation}
where~$\epsilon>0$ is a small number (to avoid division by zero),
and~$X_0$ is an initial estimate of the matrix~$X$. The trade-off parameter~$C$
will be tuned to the specific application. In the experiments, we use $w_i=2a_i$, and
choose~$b_i$ depending on the specific application. This allows us to control the
rank of the obtained solution
without excessive penalization of the non-zero singular values.

\subsection{Synthetic Missing Data}
In this section we consider the missing data problem with unknown rank
\begin{equation}
    \min_X\mu\rank(X)+\frobnorm{W\hada(X-M)}^2,
\end{equation}
where~$M$ is a measurement matrix, $\hada$ denotes the Hadamard (or element-wise) product,
and~$W$ is a missing data mask, with $w_{ij}=1$ if the entry $(i,j)$ is known, and zero otherwise.

Ground truth matrices~$M_0$ of size $32\times 512$ with $\rank(M_0)=4$ are generated, and to simulate
noise, a matrix~$N$ is added to obtain the
measurement matrix $M=M_0+N$. The entries of the noise matrix are normally
distributed with zero mean and standard deviation~$\sigma=0.1$.

When benchmarking image inpainting and deblurring, it is common to assume a uniformly distributed
missing data pattern. This assumption, however, is not applicable in many other subfields of
computer vision. In structure-from-motion the missing data pattern is typically very structured,
due to tracking failures. For comparison we show the reconstruction results for several methods,
on both uniformly random missing data patterns and tracking failures. The tracking failure
patterns were generated as in~\cite{larsson-olsson-cvpr-2017}.
The results are shown in Table~\ref{tab:synth}. Here we use the
$a_i = \fr{\sqrt{\mu}}{\sigma_i(M) + \epsilon}$, and
$b_i = \fr{\mu}{\sigma_i(M) + \epsilon}$, with $\epsilon=10^{-6}$. All other
parameters are set as proposed by the respective authors.

\subsection{Non-Rigid Deformation with Missing Data}
This experiment is constructed to highlight the downsides of using WNNM, and to illustrate
how shrinking bias can manifest itself in a real-world application.
Non-rigid deformations can be seen as a low-rank minimizing problem by assuming that
the tracked image points are moving in a low-dimensional subspace. This allows
us to model the points using a linear shape basis, where the complexity of the motion is
limited by the number of basis elements. This in turn, leads to the task of accurately making
trade-offs while enforcing a low (and unknown) rank, which leads to the problem formulation
\begin{equation}\label{eq:exp2}
    \min_{X}\mu\rank(X) + \norm{W\hada(X-M)},
\end{equation}
where $X=CB^\T$, with~$B$ being concatenated basis elements and~$C$ the corresponding
coefficient matrix. We use the experimental setup from~\cite{larsson-etal-eccv-2014},
where a KLT tracker is used on a video sequence. The usage of the tracker naturally induces
a structured missing data pattern, due to the inability to track the points through the entire
sequence.

We consider the relaxation of~\eqref{eq:exp2}
\begin{equation}
    \min_{X} \reg_h(X) + \frobnorm{W\hada(X-M)}^2,
\end{equation}
and choose $a_i = \fr{C}{\sigma_i(M)+\epsilon}$ and $b_i=0$ for $i\leq 3$ and $b_i=1/(C+\epsilon)$
otherwise. This choice of~$\vec{b}$ encourages a rank~3 solution without penalizing the large
singular values, and by choosing the parameter~$C$ vary the strength of the fix rang regularization
versus the weighted nuclear norm penalty. The datafit vs the parameter~$C$ is shown in
Table~\ref{tab:exp2}, and the reconstructed points for four frames of the \emph{book} sequence
are shown in Figure~\ref{fig:nonrigid2}.

Notice that, the despite the superior datafit for~$C=1$ (encouraging the WNNM
penalty), it is clear by visual inspection that the missing points are suboptimally recovered.
In Figure~\ref{fig:nonrigid2} the white center marker is the origin, and we note a tendency for
the WNNM penalty to favor solutions where the missing points are closer to the origin. This is the
consequence of a shrinking bias, and is only remedied by leaving the larger singular values
intact, thus excluding WNNM as a viable option for such applications.


\begin{table}[h]
\centering
\caption{Datafit for different values of~$C$. Note that the datafit for $C=1$ is better than
for~$C=10^{-2}$. This comes at the cost of incorrectly reconstructing the missing points,
as is shown in Figure~\ref{fig:nonrigid2}. The
datafit is measured as~$\frobnorm{W\hada(X-M)}$.}
\begin{tabular}{c|c c c c}
$C$ & $10^{-2}$ & 1 & 100   \\\hline
Datafit & 0.8354 & 0.4485 & 6.5221
\end{tabular}
\label{tab:exp2}
\end{table}


\begin{figure*}[t]
\centering
\def\w{3.8cm}
\setlength\tabcolsep{2pt} 
\begin{tabular}{cccc}
Frame 1 & Frame 121 & Frame 380 & Frame 668 \\
\includegraphics[width=\w]{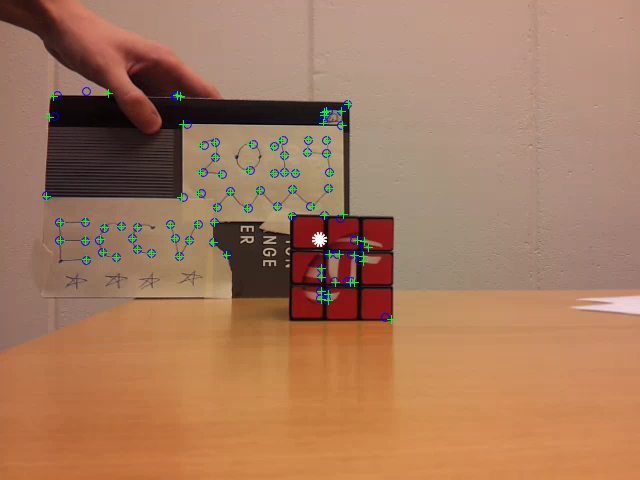} &
\includegraphics[width=\w]{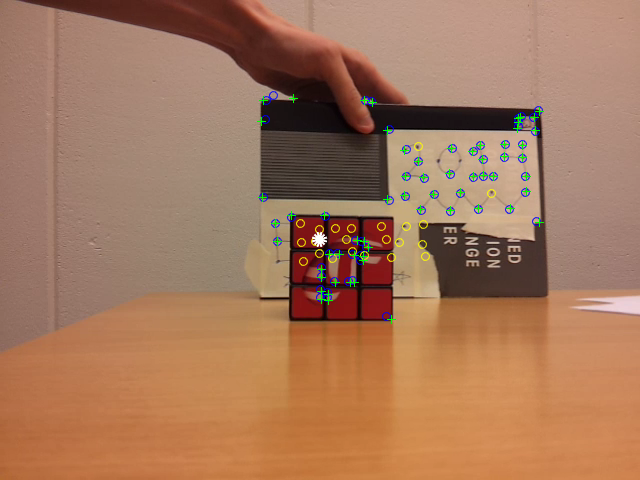} &
\includegraphics[width=\w]{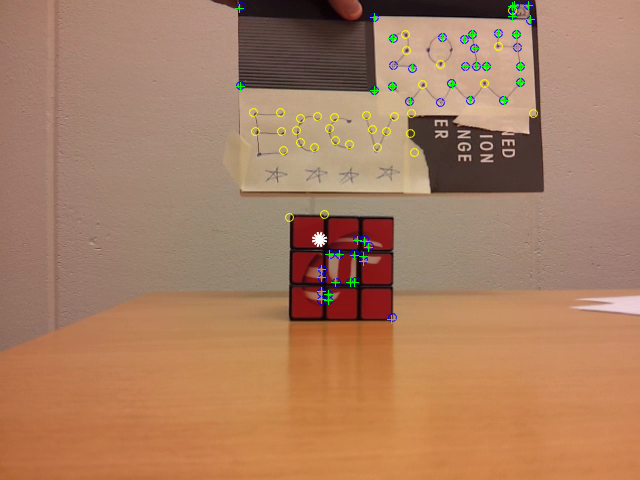} &
\includegraphics[width=\w]{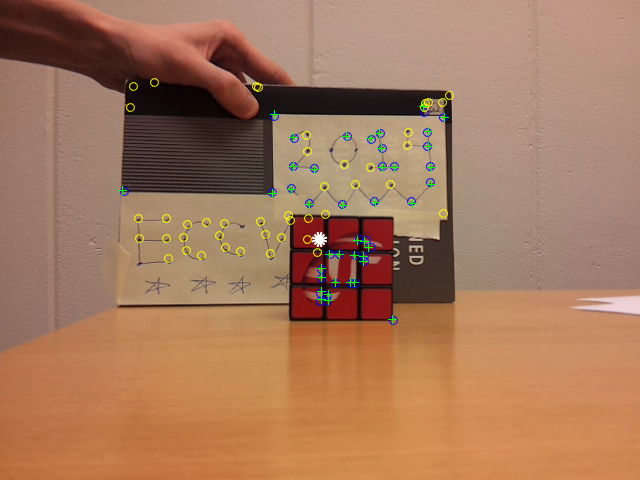} \\
\includegraphics[width=\w]{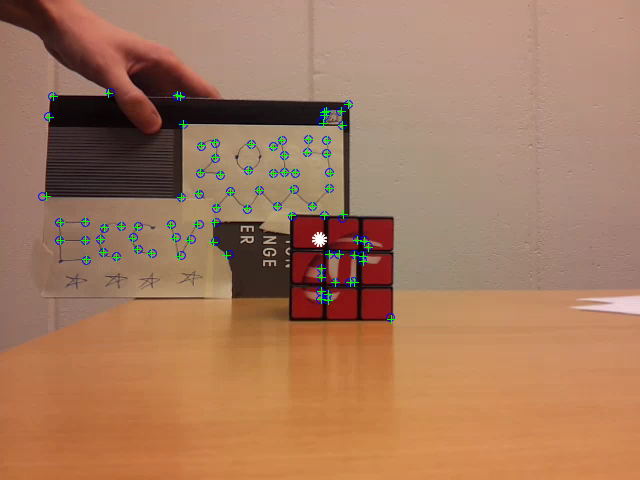} &
\includegraphics[width=\w]{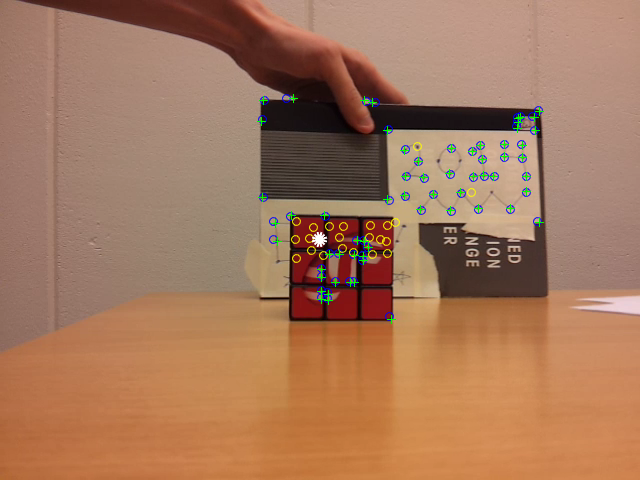} &
\includegraphics[width=\w]{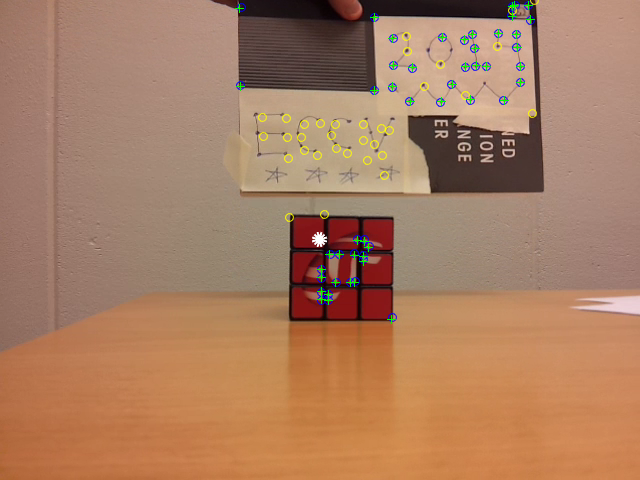} &
\includegraphics[width=\w]{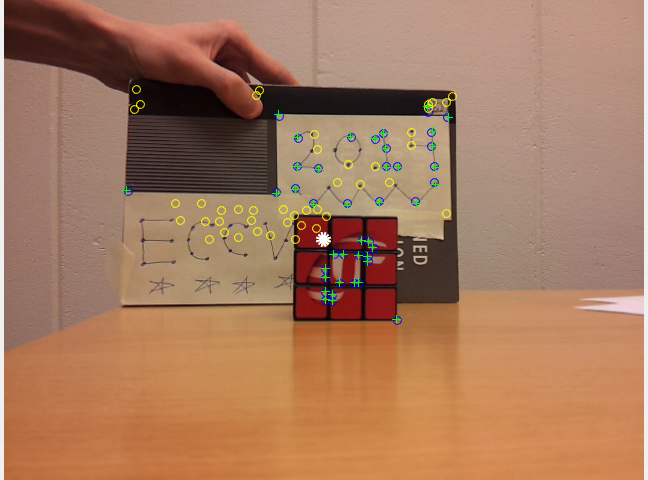} \\
\includegraphics[width=\w]{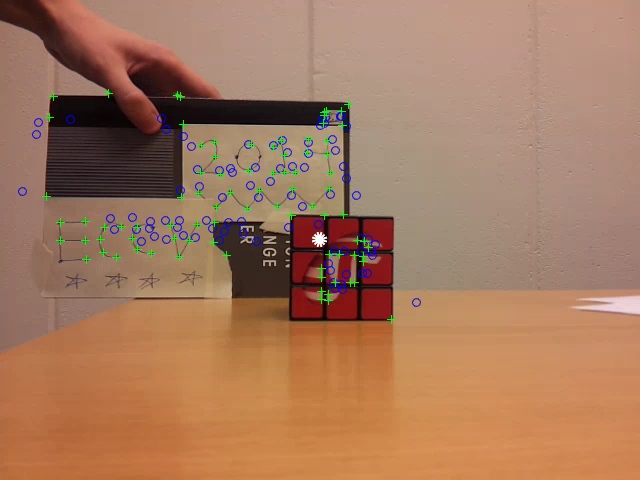} &
\includegraphics[width=\w]{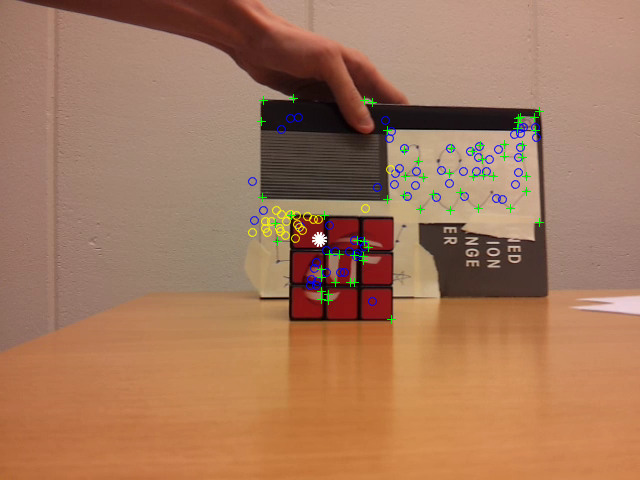} &
\includegraphics[width=\w]{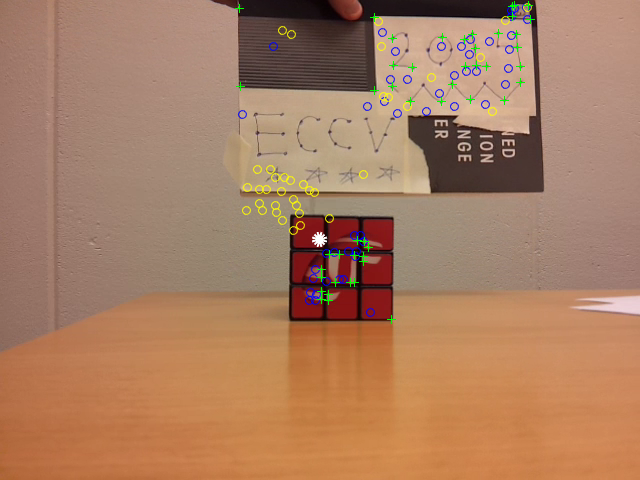} &
\includegraphics[width=\w]{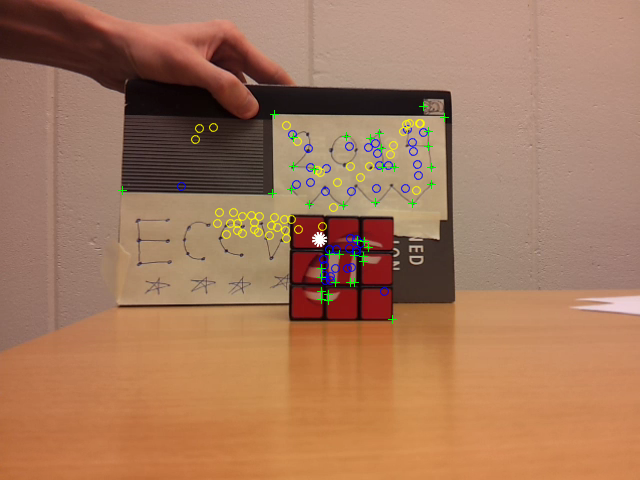} \\
\end{tabular}
\caption{From top to bottom, $C=10^{-2}$, $C=1$ and $C=100$. The white center dot is the origin
in the chosen coordinate system. The green crosses show the observed data, and the blue dots the reconstruction of these points. The yellow dots correspond to the recovered (and missing) data. Notice
the shrinking bias which is evident due to the recovered missing data being drawn towards the center
of the image as the WNNM penalty increases.}
\label{fig:nonrigid2}
\end{figure*}

\subsection{Motion Capture}
\label{sec:nrsfm}
The popular prior-free objective, proposed by Dai~\etal{}~\cite{dai-etal-ijcv-2014}, for NRSfM
\begin{equation}
    \min_X \mu\norm{X^\sharp}_* + \frobnorm{RX-M}^2,
\end{equation}
where~$X^\sharp$ a stacked version of~$X$ (see~\cite{dai-etal-ijcv-2014} for details),
suffers from shrinking bias, due to the nuclear norm penalty. Essentially, the nuclear norm
penalty is a way of relaxing the soft rank penalty, 
\begin{equation}
    \min_X \mu\rank(X^\sharp) + \frobnorm{RX-M}^2,
\end{equation}
however, it was shown in~\cite{olsson-etal-iccv-2017}, that simply using the convex
envelope of the rank function leads to non-physical reconstructions.
To tackle this situation, it was proposed to penalize
the 3D trajectories using a difference operator~$D$,
\begin{equation}
    \min_X \mu\rank(X^\sharp) + \frobnorm{RX-M}^2 + \frobnorm{DX^\sharp}^2.
\end{equation}
While such an objective leads to more physical solutions~\cite{olsson-etal-iccv-2017},
it also restricts the method
to ordered sequences of images. To allow for unordered sequences, we
suggest to replace the difference operator with an increasing penalty for smaller
singular values, modelled by an increasing sequence of weights~$\{a_i\}$. More specifically,
we consider the problem of minimizing
\begin{equation}\label{eq:our_nrsfm}
    \min_X \reg_h(X^\sharp) + \frobnorm{RX-M}^2,
\end{equation}
where sequences $\{a_i\}$ and $\{b_i\}$ are non-decreasing.
This bears resemblance to the weighted nuclear norm
approach presented in~\cite{kumar-2019-arxiv}, recently,
which coincide for the special case~$b_i\equiv 0$. Furthermore, this modified approach
exhibits far superior reconstruction results compared to the original method
proposed by Dai~\etal{}~\cite{dai-etal-ijcv-2014}.
In our comparison, we employ the same initialization heuristic for the
weights~$w_i$ on the singular values as in~\cite{gu-2016,kumar-2019-arxiv},
namely
\begin{equation}
\label{eq:kumar_weights}
    w_i = \fr{C}{\sigma_i(X_0^\sharp)+\epsilon},
\end{equation}
where $\epsilon=10^{-6}$  and $C>0$.
The matrix~$X_0^\sharp=R^+M$, where $R^+$ is the pseudo-inverse of~$R$, has successfully
been used as an
initialization scheme for NRSfM by others~\cite{dai-etal-ijcv-2014,valmadre-etal-2015,kumar-2019-arxiv}.

In practice, we choose $2a_i=w_i$, as in~\eqref{eq:kumar_weights},
with~$C=2\sqrt{\mu}$
and $b_i=w_i$, with $C=\mu$. This enforces mixed a soft-rank and hard rank thresholding.

We select four sequences from the CMU MOCAP dataset, and compare to the original method
proposed by Dai~\etal{}~\cite{dai-etal-ijcv-2014}, the newly proposed weighted
approach by Kumar~\cite{kumar-2019-arxiv}, the
method by Larsson and Olsson~\cite{larsson-olsson-ijcv-2016}
and our proposed objective~\eqref{eq:our_nrsfm},
all of which are prior-free, and do not assume that the image sequences are ordered.
For the nuclear norm approach by Dai~\etal{} we use the regularization
parameter~$\lambda=2\sqrt{\mu}$, and for Kumar, we set~$C=2\sqrt{\mu}$ (as for $\reg_h$)
and run the different
methods for a wide range of values for~$\mu$, using the same random initial solutions. We then
measure the datafit, defined as~$\frobnorm{RX-M}$ and the distance to ground
truth~$\frobnorm{X-X_{\text{gt}}}$, and show how these depend on the output rank (here defined
as the number of singular values larger than $10^{-6}$). By doing so, we see the ability
of the method
to make accurate trade-offs between fitting the data and enforcing the rank.
The results are shown in Figure~\ref{fig:mocapres}.

Note that, the datafit for all methods decrease with increase rank, which is to be expected;
however, we immediately note that the ``soft rank'' penalty~\eqref{eq:softrank}, in this case,
is too weak, which manifests itself by quickly fitting to data and the distance to ground truth
does not correlate with the datafit for solutions with rank larger than three.
For the revised method by Kumar~\cite{kumar-2019-arxiv}, as well as ours,
the correlation between the two quantities is much stronger.
What is interesting to see is that our method consistently performs better than the WNNM approach
for lower rank levels, suggesting that the shrinking bias is affecting the quality of these
reconstruction. Note, however, that the minimum distance to ground truth, obtained using the
WNNM approach is as good (or better) than the one obtained using~$\reg_h$. To obtain such a solution,
however, requires careful tuning of the~$\mu$ parameter and is unlikely to work on other datasets.

\begin{figure*}[p]
\centering
\setlength\tabcolsep{6pt} 
\def\w{0.365\linewidth}
\def\ww{0.19\linewidth}
\renewcommand{\arraystretch}{1.7}
\begin{tabular}{ccc}\
\emph{Drink} &
\multirow{2}{*}{\includegraphics[width=\w]{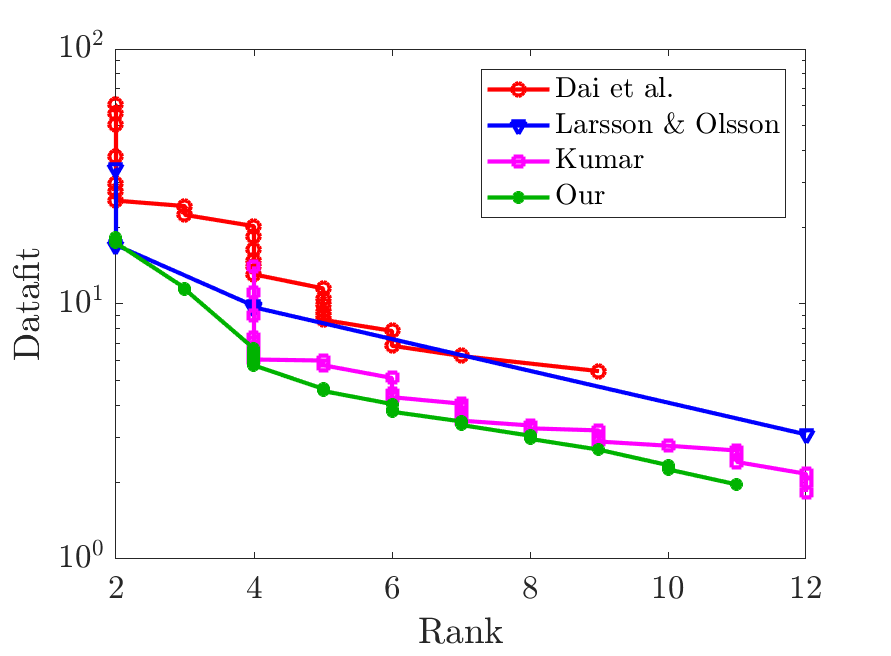}} &
\multirow{2}{*}{\includegraphics[width=\w]{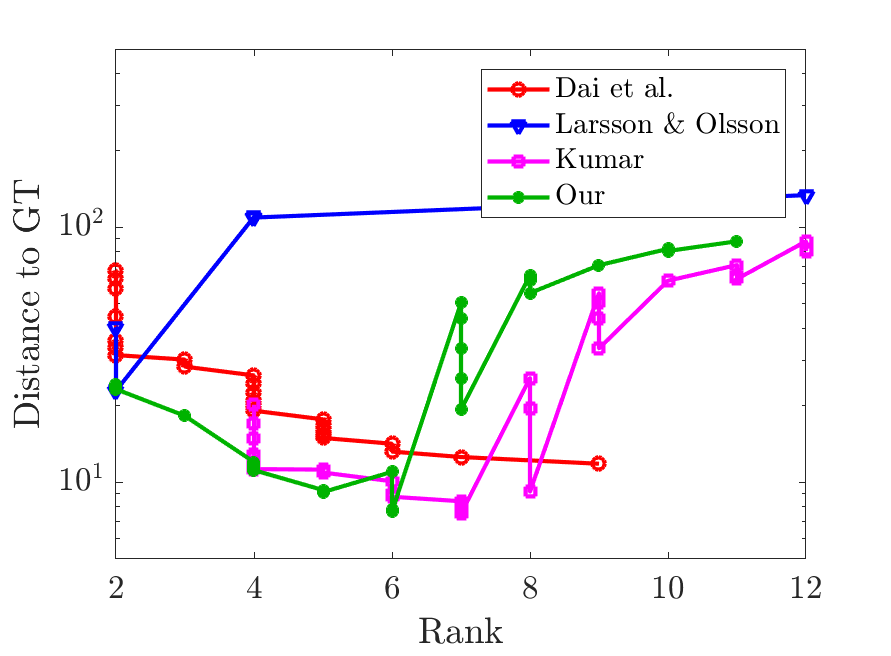}}\\
\includegraphics[width=\ww,trim={1.62cm 0 1.62cm 0},clip]{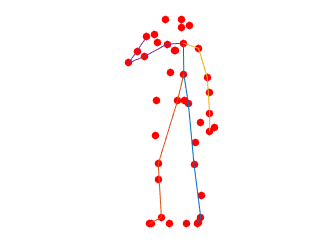} \\
\emph{Pickup} &
\multirow{2}{*}{\includegraphics[width=\w]{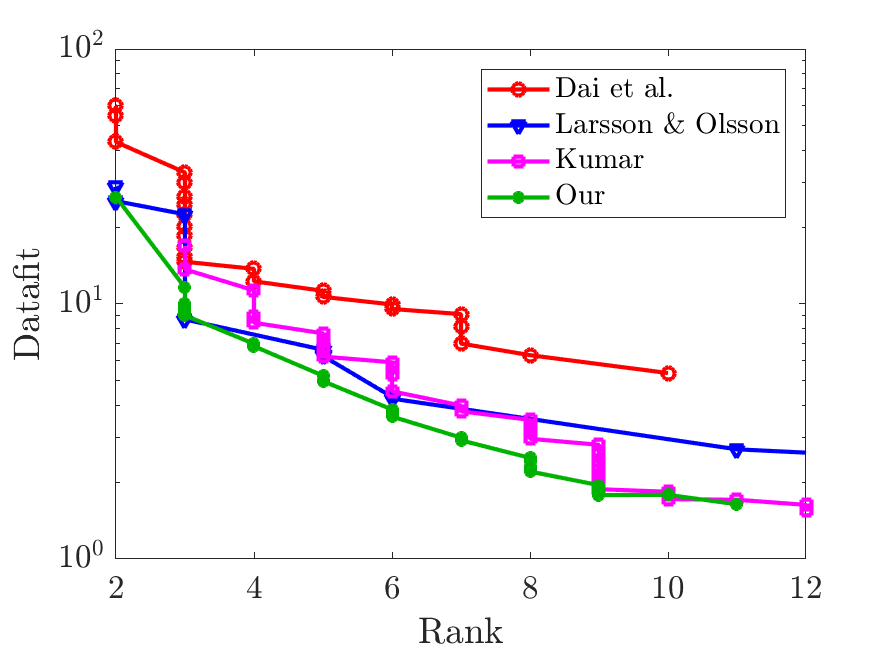}} &
\multirow{2}{*}{\includegraphics[width=\w]{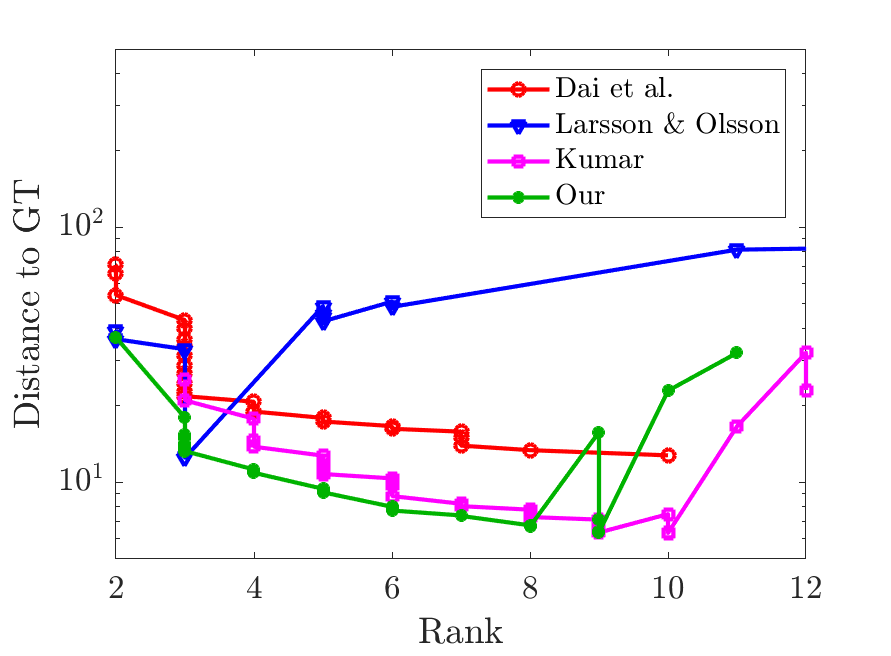}}\\
\includegraphics[width=\ww,trim={1.62cm 0 1.62cm 0},clip]{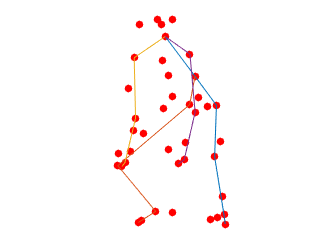} \\
\emph{Stretch} &
\multirow{2}{*}{\includegraphics[width=\w]{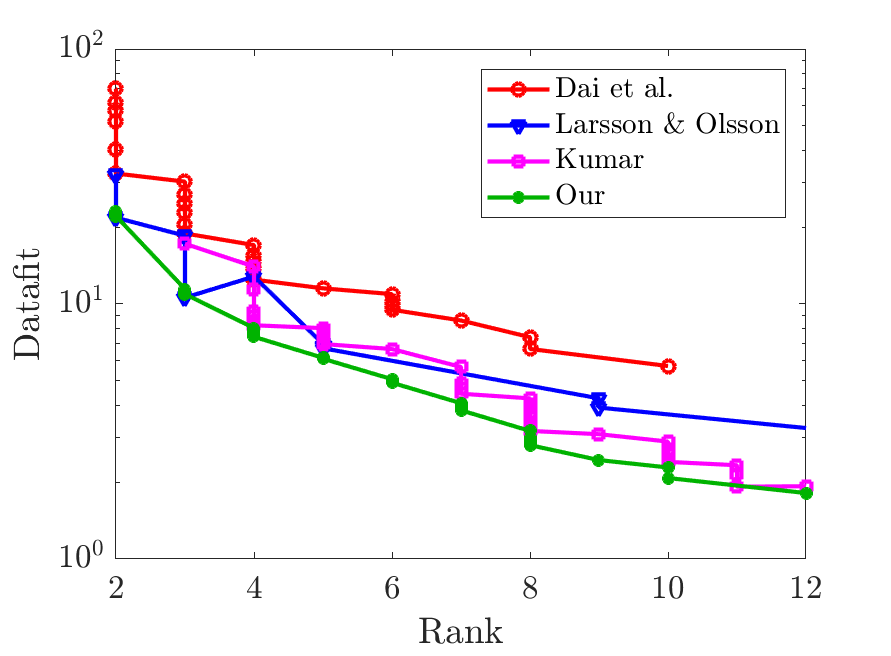}} &
\multirow{2}{*}{\includegraphics[width=\w]{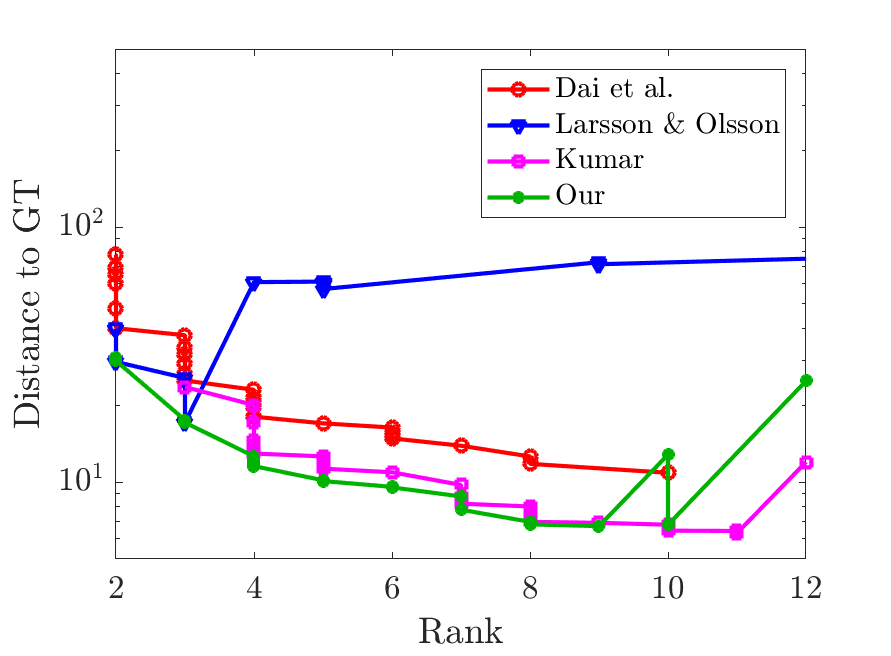}}\\
\includegraphics[width=\ww,trim={1.62cm 0 1.62cm 0},clip]{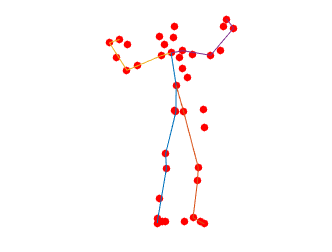} \\
\emph{Yoga} &
\multirow{2}{*}{\includegraphics[width=\w]{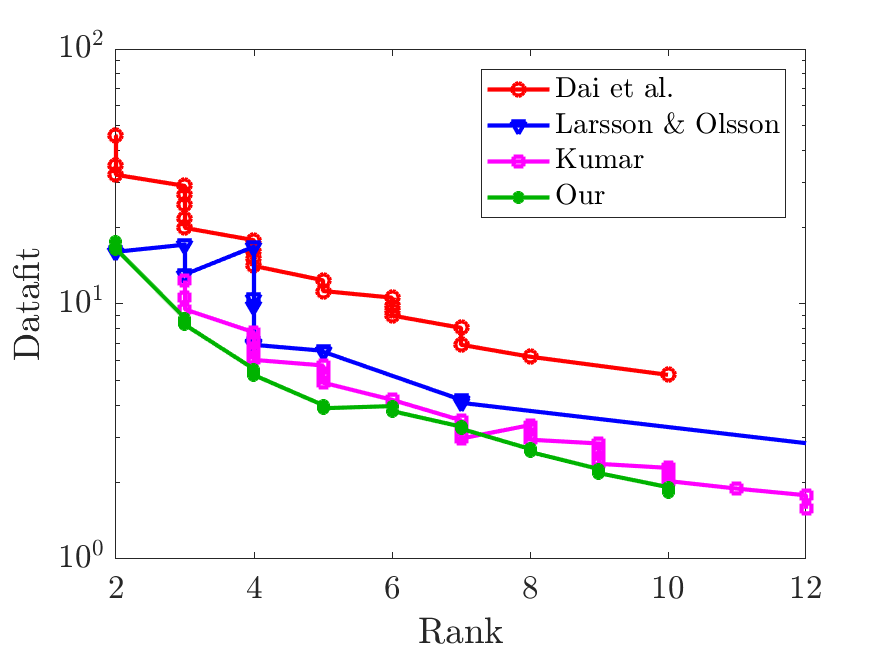}} &
\multirow{2}{*}{\includegraphics[width=\w]{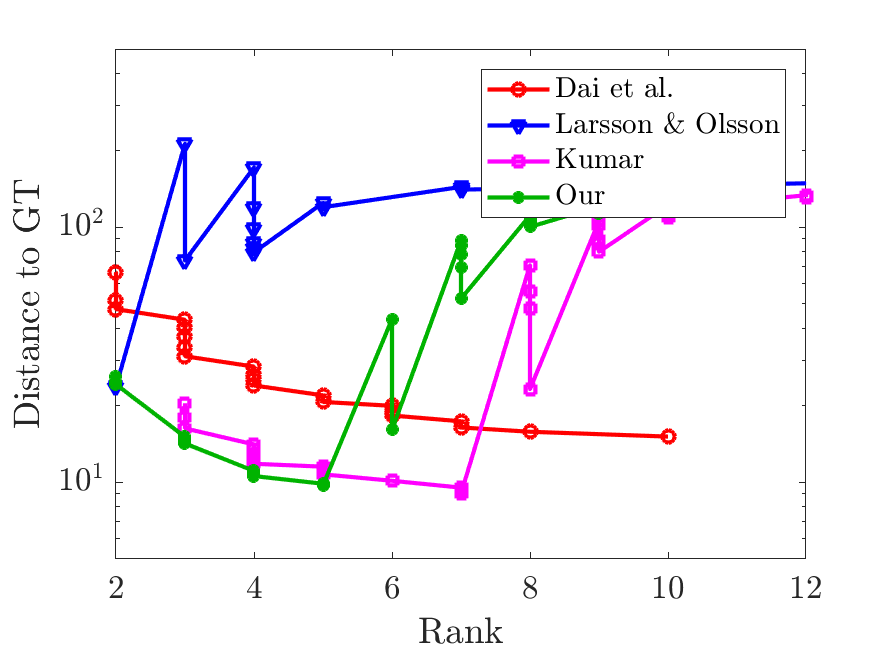}} \\
\includegraphics[width=\ww,trim={1.62cm 0 1.62cm 0},clip]{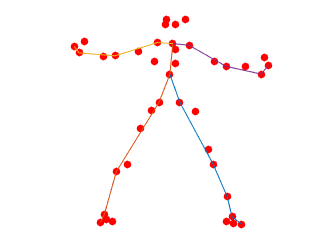} \\
\end{tabular}
\caption{Results for the experiment on the CMU MOCAP dataset.
\emph{First column:} Example images with skeleton added for visualization.
\emph{Second column:} The datafit, measured as~$\frobnorm{RX-M}$, as a function of the rank.
\emph{Last column:} Distance to ground truth, measured as~$\frobnorm{X-X_{\text{gt}}}$,
as a function of the rank.}
\label{fig:mocapres}
\end{figure*}

\section{Conclusions}
Despite success in many low-level imaging applications, there are
limitations of the applicability of WNNM in other applications of low-rank regularization.
In this paper, we have provided theoretical insight into the issues surrounding shrinking
bias, and proposed a solution where the shrinking bias can be partly or completely eliminated,
while keeping the rank low. This can be done using the proposed~$\reg_h$ regularizer,
which has the benefit of unifying weighted nuclear norm regularization with another class
of low-rank inducing penalties. Furthermore, an efficient
way of computing the regularizer has been proposed, as well as the related proximal
operator, which makes it suitable for optimization using splitting scheme, such as ADMM.

\clearpage
{\small
\bibliographystyle{ieee_fullname}
\bibliography{rh_admm}
}

\newpage
\twocolumn[
\vspace{0.8cm}
\begin{center}
{\Large \bf Supplementary Material:\\
A Unified Optimization Framework for Low-Rank Inducing Penalties\\}
\vspace{1.5cm}
\end{center}
]

\setcounter{section}{0} 

\begin{figure*}[t]
\centering
\includegraphics[width=0.382\linewidth]{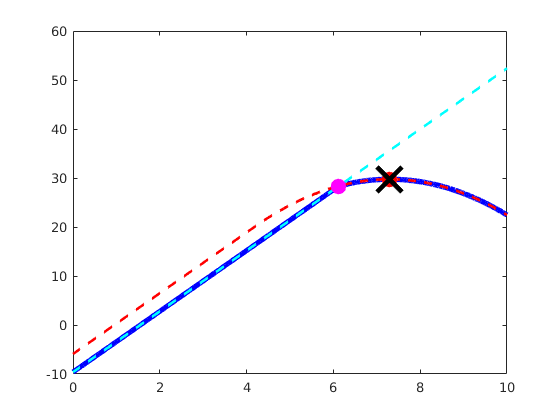}
\qquad
\includegraphics[width=0.382\linewidth]{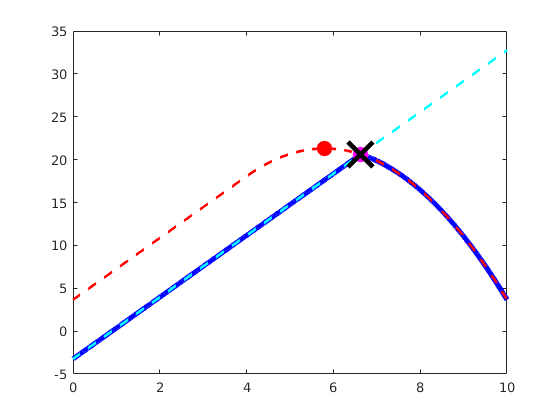}
\caption{Two different cases for values of $a_i$, $b_i$ and $\sigma_i(X)$
of~\eqref{eq:unconstrained}.}
\label{fig:illustration}
\end{figure*}

\section{The Fenchel Conjugate}
In this section, we compute the Fenchel conjugate of~\eqref{eq:fh},
which is necessary in order to find
the convex envelope.
Let $\inner{X}{Y}=\tr(X^{\T} Y)$, and note that we can write
\begin{equation}
\label{eq:Z}
    \inner{Y}{X}-\frobnorm{X-X_0} =
        \frobnorm{Z}^2 - \frobnorm{X_0}^2-\frobnorm{Z-X}^2.\\
\end{equation}
where $Z=\fr{1}{2}Y+X_0$.
By definition, the Fenchel conjugate of~\eqref{eq:fh} is given by
\begin{equation}
\begin{aligned}
    f^*_h(Y) &= \sup_X\inner{Y}{X}-f_h(X) \\
             &= \sup_X \frobnorm{Z}^2-\frobnorm{X_0}^2-\frobnorm{X-Z}^2-h(\sing(X)),
\end{aligned}
\end{equation}
where we use~\eqref{eq:Z} in the last step.
Note that the function~$h$, as well as the Frobenius norm, is unitarily invariant. Furthermore,
$\frobnorm{X-Z}^2=\frobnorm{X}^2+\frobnorm{Z}^2-2\inner{X}{Z}$, and $\inner{X}{Z}\leq\inner{\sing(X)}{\sing(Z)}$ by von Neumann's trace inequality, with equality if $X$ and $Z$ are
simultaneously unitarily diagonalizable. This reduces the problem to optimizing over
the singular values alone, which, after some manipulation, can be written as
\begin{equation}
\label{eq:fenchel1b}
\begin{aligned}
    f^*_h(Y)
        &= \max_{\sing(X)}
             -\frobnorm{X_0}^2 \\
               &\quad- \sum_{i=1}^k\left(\sigma_i^2(X) -  2[\sigma_i(Z)-a_i] \sigma_i(X) + b_i \right),
\end{aligned}
\end{equation}
where $\rank(X)=k$.
Considering each singular value separately leads to a program on the form
\begin{equation}
    \min_{x_i} x_i^2 - 2[\sigma_i(Z)-a_i] x_i +b_i,
\end{equation}
subject to $\sigma_{i+1}(X)\leq x_i \leq\sigma_{i-1}(X)$.
The sequence of unconstrained minimizers is given by $x_i=\sigma_i(Z)-a_i$.
If there exists $x_i<0$, then this is not the solution to the constrained
problem. Nevertheless, the sequence is non-increasing, hence there is an index $p$, such that
$x_p\geq 0$ and $x_{p+1}<0$\footnote{We allow the case~$p=0$, in which case the zero vector is optimal.}.

Note that
\begin{equation}
    \sum_{i=1}^kx_i^2-2s_ix_i = \norm{\vec{x}}^2-2\inner{\vec{x}}{\vec{s}},
\end{equation}
hence we can consider
optimizing $\norm{\vec{x}-\vec{s}}^2=\norm{\vec{x}}^2-2\inner{\vec{x}}{\vec{s}}+\norm{\vec{s}}^2$
subject to $x_1\geq x_2\geq\cdots\geq x_k\geq 0$.
Furthermore, $s_1\geq s_2\geq\cdots\geq s_k$.

Assume that minimum is obtained at $\vec{x}^*$ and fix $x_p^*$. Since $s_j<0$ for all $j>p$,
we must have $x^*_j=0$ for $j>p$. It is now clear that, $x_j^*=s_j$ otherwise,
hence $x_j^*=\max\{s_j,0\}=[s_j]_+$.
Inserting into~\eqref{eq:fenchel1b} gives
\begin{equation}
\label{eq:fenchel2case2}
    f^*_h(Y)
        = \max_{k}
             -\frobnorm{X_0}^2-
                \sum_{i=1}^k\left( b_i-[\sigma_i(Z)-a_i]_+^2 \right).
\end{equation}
Since $[s_i]_+=[\sigma_i(Z)-a_i]_+$ is non-increasing, and $b_i$ is non-decreasing, the maximizing~$k$
is obtained when
\begin{equation}
    [\sigma_k(Z)-a_k]_+^2\geq b_k
    \te{\, and\, }
    b_{k+1} \geq [\sigma_{k+1}(Z)-a_{k+1}]_+^2.
\end{equation}
For the maximizing $k=k^*$, we can write
\begin{equation}
\begin{aligned}
    &-\sum_{i=1}^{k^*}\left( b_i-[\sigma_i(Z)-a_i]_+^2 \right)\\
    &\quad= \sum_{i=1}^n[\sigma_i(Z)-a_i]_+^2 - \sum_{i=1}^n\min\{b_i,[\sigma_i(Z)-a_i]_+^2\}.
\end{aligned}
\end{equation}
From this observation, we get
\begin{equation}
\label{eq:case2fenchel}
\begin{aligned}
    f^*_h(Y) &= \sum_{i=1}^n\left[\sigma_i(\fr{1}{2}Y+X_0)-a_i\right]_+^2 - \frobnorm{X_0}^2 \\
             &-\sum_{i=1}^n\min\left(b_i, \left[\sigma_i(\fr{1}{2}Y+X_0)-a_i\right]_+^2\right).
\end{aligned}
\end{equation}

\section{The Convex Envelope}
Applying the definition of the bi-conjugate
$
    f^{**}_h(X) = \sup_Y\inner{Y}{X}-f^*_h(Y)
$ 
to~\eqref{eq:case2fenchel}, and introduce the change of variables $Z=\fr{1}{2}Y+X_0$
we get
\begin{equation}
\begin{aligned}
    f^{**}_h(X) &= \max_Z 2\inner{X}{Z-X_0}-\sum_{i=1}^n\left[\sigma_i(Z)-a_i\right]_+^2 \\
                &+\frobnorm{X_0}^2
    +\sum_{i=1}^n\min\left(b_i, \left[\sigma_i(Z)-a_i\right]_+^2\right).
\end{aligned}
\end{equation}
By expanding squares and simplifying, $2\inner{X}{Z-X_0}+\frobnorm{X_0}^2=\frobnorm{X-X_0}^2-\frobnorm{X-Z}^2+\frobnorm{Z}^2$,
which yields~\eqref{eq:reghfull}.

\section{Obtaining the Maximizing Sequences}
In this section we give the proof for the convergence of Algorithm~1, and how
to modify it to cope with the corresponding problem for the proximal operator.

\subsection{Proof of Theorem~\ref{thm:algo1}}
\begin{proof}[Proof of Theorem~\ref{thm:algo1}]
First, we will show that each step in the algorithm returns a solution to a constrained
subproblem~$P_i$, corresponding to a (partial) set of desired constraints~$\mathcal{Z}_i$.

Let~$P_0$ denote the unconstrained problem with solution~$\mathcal{s}\in\R_+^n$. Denote
the first interval generated in Algorithm~\ref{algo:1} by~$\iota_1=\{m_1,\ldots,n_1\}$,
and consider optimizing the first subproblem $P_1$
\begin{equation}
    \max_{z_{m_1}\geq \cdots\geq z_{n_1}} c(\vec{z}),
\end{equation}
where $\mathcal{Z}_1 = \{\vec{z}\in\mathcal{Z}_0\,|\,z_{m_1}\geq \cdots\geq z_{n_1}\}$.
By Lemma~\ref{lem:1} the solution vector is constant over the
subinterval~$z_i = s$ for $i\in\iota_1$, which is returned by the algorithm.
The next steps generates a solution
to subproblem of the form
\begin{equation}
    \max_{\scriptsize\begin{matrix}z_{m_1}\geq \cdots\geq z_{n_1}\\\vdots\\{z_{m_k}\geq \cdots\geq z_{n_k}}\end{matrix}} c(\vec{z}),
\end{equation}
corresponding to subproblem $P_k$. I the solution to subproblem~$P_k$ is in~$\mathcal{Z}$,
then it is a solution to the problem, otherwise one must add more constraints.
We solve problems on the form
\begin{equation}
    \max_{\vec{z}\in\mathcal{Z}_0} c(\vec{z})
    \geq
    \max_{\vec{z}\in\mathcal{Z}_1} c(\vec{z})
    \geq
    \cdots
    \geq
    \max_{\vec{z}\in\mathcal{Z}_\ell} c(\vec{z})
    =
    \max_{\vec{z}\in\mathcal{Z}} c(\vec{z}),
\end{equation}
where the last step yields a solution fulfilling the desired constraints.
Furthermore $\mathcal{Z}_0\supset\mathcal{Z}_1\supset\cdots\supset\mathcal{Z}_\ell\supset\mathcal{Z}$,
where~$\mathcal{Z} = \{\vec{z}\:|\:z_1\geq\cdots\geq z_n\geq 0\}$.
Finally, it is easy to see that the algorithm terminates, since there are only finitely
many possible subintervals.
\end{proof}

\begin{figure*}[t]
\centering
\includegraphics[width=0.3\linewidth]{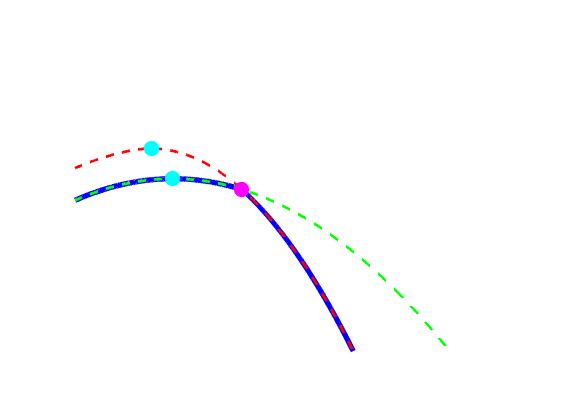}
\quad
\includegraphics[width=0.3\linewidth]{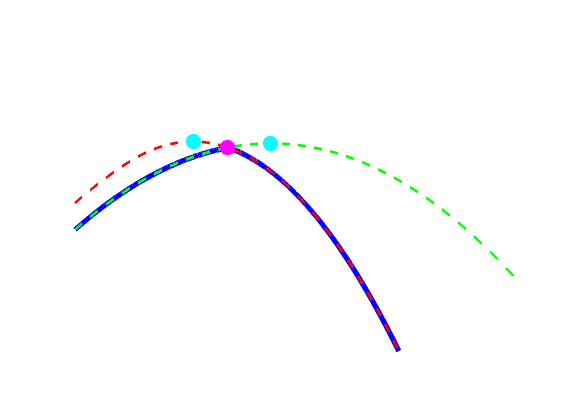}
\quad
\includegraphics[width=0.3\linewidth]{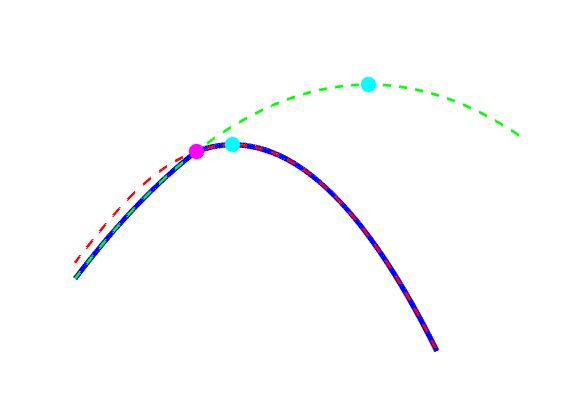}
\caption{Illustration of the three different cases for the proximal operator.}
\label{fig:prox}
\end{figure*}

\subsection{Modifying Algorithm~\ref{algo:1}}
Following the approach used in~\cite{larsson-olsson-ijcv-2016},
consider the program
\begin{equation}
\label{eq:unconstrained2}
\begin{aligned}
&\max_s    && \min\{b_i,[s-a_i]_+^2\}-\fr{\rho+1}{\rho}(s-\sigma_i(Y))^2 \\
&&&+s^2-[s-a_i]_+^2, \\
&\te{s.t.} && \sigma_{i+1}(Z)\leq s\leq \sigma_{i-1}(Z).
\end{aligned}
\end{equation}
Note that the objective function is the pointwise minimum of
\begin{equation}
\begin{aligned}
f_1(s) &= b_i-\fr{\rho+1}{\rho}(s-\sigma_i(Y))^2+s^2-[s-a_i]_+^2,\\
f_2(s) &= s^2-\fr{\rho+1}{\rho}(s-\sigma_i(Y))^2,
\end{aligned}
\end{equation}
both of which are concave, since~$\fr{\rho+1}{\rho}>1$.
For $f_1$ the maximum is obtained in $s=\fr{a_i\rho}{\rho+1}+\sigma_i(Y)$, if $s\geq a_i$ otherwise
when~$s=(\rho+1)\sigma_i(Y)$. The minimum of $f_2$ is obtained
when~$s=(\rho+1)\sigma_i(Y)$.

There are three possible cases, also shown in Figure~\ref{fig:prox}.
\begin{enumerate}
\item The maximum occurs when $s>a_i+\sqrt{b_i}$, hence $f_1(s)<f_2(s)$,
hence~$s=\fr{a_i\rho}{\rho+1}+\sigma_i(Y)$.
\item The maximum occurs when $s<a_i+\sqrt{b_i}$, where $f_1(s)>f_2(s)$,
hence~$s=(\rho+1)\sigma_i(Y)$.
\item When $s=a_i+\sqrt{b_i}$, which is valid elsewhere.
\end{enumerate}
in summary
\begin{equation}\label{eq:si2}
s_i = \left\{
\begin{aligned}
    &\fr{a_i\rho}{\rho+1}+\sigma_i(Y), && \fr{a_i}{\rho+1}+\sqrt{b_i}< \sigma_i(Y), \\
    &a_i+\sqrt{b_i}, && \fr{a_i+\sqrt{b_i}}{1+\rho}\leq \sigma_i(Y) \leq \fr{a_i}{\rho+1}+\sqrt{b_i},\\
    &(1+\rho)\sigma_i(Y), && \sigma_i(Y)<\fr{a_i+\sqrt{b_i}}{1+\rho},
\end{aligned}
\right.
\end{equation}
By replacing the sequence of unconstrained minimizers~$\{s_i\}$ defined by~\eqref{eq:si2},
with the corresponding sequence in Section~\ref{sec:finding} (of the main paper),
and changing the objective function of Algorithm~\ref{algo:1},
to the one in~\eqref{eq:unconstrained2}, the maximizing singular value vector fo the proximal
operator is obtained.

\end{document}